\newtheorem{prethm}{{\bf Theorem}}
\newenvironment{thm}{\begin{prethm}{\hspace{-0.5
               em}{\bf.}}}{\end{prethm}}
\newtheorem{prepro}[prethm]{Proposition}
\newtheorem{prelem}[prethm]{Lemma}
\newenvironment{lem}{\begin{prelem}{\hspace{-0.5
               em}{\bf.}}}{\end{prelem}}
\newtheorem{precor}[prethm]{Corollary}
\newenvironment{cor}{\begin{precor}{\hspace{-0.5
               em}{\bf.}}}{\end{precor}}
\newtheorem{prerem}[prethm]{{\bf Remark}}
\newenvironment{rem}{\begin{prerem}\em{\hspace{-0.5
              em}{\bf.}}}{\end{prerem}}
\newtheorem{preexample}{{\bf Example}}
\newtheorem{preproof}{{\bf Proof.}}
\newenvironment{proof}[1]{\begin{preproof}{\rm
               #1}\hfill{$\Box$}}{\end{preproof}}
\newcommand{\1}{{\bf 1}}
\newcommand{\p}{\top}
\newcommand{\la}{\lambda}
\newcommand{\li}{{\cal L}}
\newcommand{\rk}{{\rm rank}}
\newcommand{\A}{{\cal A}}
\renewcommand{\thefootnote}
\title{Spanning trees and even integer eigenvalues of graphs}
\author{\sc Ebrahim Ghorbani 
\\
{\small { Department of Mathematics,
K.N. Toosi University of Technology,}}\\
 {\small { P.O. Box 16315-1618, Tehran, Iran}}\\
 {\small { School of Mathematics, Institute
for Research in Fundamental Sciences (IPM),}} \\
 {\small { P.O. Box 19395-5746, Tehran, Iran}}\\
 {\tt\small e\_ghorbani@ipm.ir}}
\date{}
\begin{document}
\maketitle

\begin{abstract}
For a graph $G$, let $L(G)$ and $Q(G)$ be the Laplacian and signless Laplacian matrices of $G$, respectively, and $\tau(G)$ be the number of spanning trees of $G$.
We prove that if $G$ has an odd number of vertices and $\tau(G)$ is not divisible by $4$, then  (i) $L(G)$ has no even integer eigenvalue,
 (ii) $Q(G)$ has no integer eigenvalue $\la\equiv2\pmod4$, and (iii) $Q(G)$ has at most one eigenvalue $\la\equiv0\pmod4$ and such an  eigenvalue is  simple.
 As a consequence, we extend previous results by Gutman and Sciriha and by Bapat on the nullity of adjacency matrices of the line graphs.
 We also show that if $\tau(G)=2^ts$  with $s$ odd, then the multiplicity of any even integer eigenvalue of $Q(G)$ is at most $t+1$.
 Among other things, we prove that if $L(G)$ or $Q(G)$ has an even integer eigenvalue of multiplicity at least $2$, then $\tau(G)$ is divisible by $4$.
As a very special case of this result, a conjecture by Zhou et al. [On the nullity of connected graphs with least eigenvalue at least $-2$, Appl. Anal. Discrete Math. 7 (2013), 250--261]
on the nullity of adjacency matrices of the line graphs of unicyclic graphs follows.

\vspace{3mm}
\noindent {\em AMS Classification}: 05C50, 05C05 \\
\noindent{\em Keywords}:  Spanning trees, Even integer eigenvalue, Line graph, Nullity, Signless Laplacian, Laplacian, Unicyclic graph
\end{abstract}

\section{Introduction}

The graphs we consider are simple, that is, without loops or multiple edges.
Let $G$ be a graph.
The {\em order} of $G$ is the number of vertices of $G$.
We denote by $A(G)$ the adjacency matrix,
 by $\li(G)$  the line graph and by $\tau(G)$ the number of spanning trees of $G$.

The purpose of this paper is to study the interconnection between $\tau(G)$ and the multiplicities of even integer eigenvalues of
$A(\li(G))$.
Our motivation comes partly from the previous works by several authors on the connection between $\tau(G)$ and the multiplicity of the zero eigenvalue, i.e. the nullity of $A(\li(G))$. A brief review of the previous results is in order.
Doob \cite{doob} proved that the binary rank (i.e. the rank over the two-element field) of $A(\li(G))$ for any connected graph $G$ of order $n$  is $n-1$ if $n$ is odd, and $n-2$
if $n$ is even. This result was stated and proved in the context of Matroid Theory. 
Considering the rank over the real numbers,
Sciriha \cite{sci} showed that the order of every tree whose line graph is
singular is even and also that the nullity of the line graph of a tree is at
most one. A new proof of the latter result appeared later in \cite{gut}.
These results can also be deduced from Doob's work.
 Recently, Bapat \cite{bapat} found an interesting generalization by proving that if $\tau(G)$ is odd, then $A(\li(G))$ has nullity at most 1. He also showed that a bipartite graph $G$ with odd $\tau(G)$ and with  singular $A(\li(G))$
must have even order.
We extend these results to the following.

\begin{thm}\label{t+1} Let $G$ be a connected graph and $\tau(G)=2^ts$  with $s$ odd. Then the multiplicity of any even integer $\la\ne-2$ as an eigenvalue of $A(\li(G))$ is at most $t+1$.
\end{thm}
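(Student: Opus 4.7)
The plan is to move from the line graph to the signless Laplacian, and then to control the multiplicity by comparing ranks over $\mathbb{Q}$ and over $\mathbb{F}_2$.

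First, I would use the (unsigned) vertex--edge incidence matrix $B$ of $G$, for which $Q(G)=BB^{\p}$ and $A(\li(G))=B^{\p}B-2I$. Since $BB^{\p}$ and $B^{\p}B$ share the same nonzero spectrum with matching multiplicities, any eigenvalue $\la\ne -2$ of $A(\li(G))$ corresponds to the eigenvalue $\la+2\ne 0$ of $Q(G)$ with the same multiplicity. This reduces Theorem \ref{t+1} to the companion statement announced in the abstract: \emph{any even integer eigenvalue of $Q(G)$ has multiplicity at most $t+1$}.

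For this reduced statement, the key observation is that reduction modulo $2$ erases the distinction between $L(G)$ and $Q(G)$: since $L(G)-Q(G)=-2A(G)$, we have $L(G)\equiv Q(G)\pmod 2$. If $\mu$ is an even integer eigenvalue of $Q(G)$ with multiplicity $k$, then $M:=\mu I-Q(G)$ is an integer matrix of $\mathbb{Q}$-rank $n-k$, and $M\equiv L(G)\pmod 2$ because $\mu$ is even. Every integer linear dependence among rows persists after reduction mod $2$, so $\rk_{\mathbb{F}_2}(L(G))=\rk_{\mathbb{F}_2}(M)\le \rk_{\mathbb{Q}}(M)=n-k$.

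To close the loop I would bound $\rk_{\mathbb{F}_2}(L(G))$ from below using the Smith normal form of the integer matrix $L(G)$. If $d_1\mid d_2\mid\cdots\mid d_{n-1}$ are the nontrivial invariant factors of $L(G)$ (with $d_n=0$, since $G$ is connected), the Matrix--Tree theorem gives $d_1 d_2\cdots d_{n-1}=\tau(G)=2^ts$ with $s$ odd, so at most $t$ of the $d_i$ are even. Consequently $\rk_{\mathbb{F}_2}(L(G))\ge n-t-1$. Combining the two inequalities yields $n-t-1\le n-k$, i.e.\ $k\le t+1$.

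The step that I expect to demand the most care is forging the bridge in the middle paragraph: one must check that $M\equiv L(G)\pmod 2$, and then invoke the inequality $\rk_{\mathbb{F}_2}(M)\le \rk_{\mathbb{Q}}(M)$ for integer matrices to bring the $2$-adic information carried by $\tau(G)$ to bear. Everything else is a routine application of the Matrix--Tree theorem and the $BB^{\p}$/$B^{\p}B$ trick.
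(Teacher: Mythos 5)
Your proposal is correct, and its second half genuinely differs from the paper's argument. Both proofs share the same opening moves: the $X^\p X$ versus $XX^\p$ reduction sending an even eigenvalue $\la\ne-2$ of $A(\li(G))$ to the even nonzero eigenvalue $\la+2$ of $Q(G)$ with the same multiplicity, and the Smith form/Matrix--Tree bound showing that at most $t$ invariant factors of $L(G)$ are even, whence $\rk_2(L)\ge n-t-1$. Where you diverge is in how this binary-rank information is converted into a multiplicity bound. The paper passes from $\rk_2(Q)\ge n-t-1$ to a principal $(n-t-1)\times(n-t-1)$ submatrix $B$ of $Q$ with odd determinant (via the lemma that a symmetric matrix of rank $r$ has a nonsingular principal $r\times r$ submatrix), and then uses eigenvalue interlacing: a multiplicity of $t+2$ would force the even integer $\la+2$ to be an eigenvalue of $B$, hence to divide $\det(B)$, contradicting its oddness. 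You instead work directly with $M=\mu I-Q$, noting $M\equiv Q\equiv L\pmod 2$ because $\mu$ is even, and invoke the standard inequality $\rk_{\mathbb{F}_2}(M\bmod 2)\le\rk_{\mathbb{Q}}(M)=n-k$; combined with $\rk_2(L)\ge n-t-1$ this gives $k\le t+1$ at once. Your route is shorter and avoids both interlacing and the principal-submatrix lemma, at the price of needing the rank-drop-under-reduction fact; your stated justification (``integer dependences persist mod $2$'') is slightly loose, since a dependence with all even coefficients reduces to the trivial one -- the clean argument is that every $(n-k+1)\times(n-k+1)$ minor of $M$ vanishes over $\mathbb{Z}$ and hence over $\mathbb{F}_2$ (or one saturates the kernel lattice), but this is routine and does not affect correctness.
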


\begin{thm}\label{mult2line} Suppose that $G$ is a graph with odd order and that $\tau(G)$ not divisible by $4$. If  $\la\ne-2$ is an even integer eigenvalue of $A(\li(G))$, then $\la\equiv2\pmod4$, $\la$ is a simple eigenvalue, and $A(\li(G))$ has at most one such eigenvalue.
\end{thm}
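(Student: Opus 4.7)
My plan is to recast Theorem~\ref{mult2line} as a statement about the signless Laplacian $Q(G)$ and then invoke the structural results about $Q(G)$ advertised in the abstract. Let $B$ be the (vertex,edge) incidence matrix of $G$. Then $Q(G)=BB^\p$ and $A(\li(G))+2I=B^\p B$, so the two products $BB^\p$ and $B^\p B$ share their nonzero spectrum with matching multiplicities. Consequently each eigenvalue $\la\ne-2$ of $A(\li(G))$ corresponds bijectively, with multiplicity preserved, to a nonzero eigenvalue $\mu=\la+2$ of $Q(G)$; under this dictionary $\la$ is an even integer iff $\mu$ is, and $\la\equiv2\pmod4$ iff $\mu\equiv0\pmod4$. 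So Theorem~\ref{mult2line} is equivalent to: if $4\nmid\tau(G)$, then every nonzero even integer eigenvalue $\mu$ of $Q(G)$ satisfies $\mu\equiv0\pmod4$, is simple, and is the unique such eigenvalue.

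The simplicity part would follow by combining two of the $Q$-results announced in the abstract. Writing $\tau(G)=2^t s$ with $s$ odd, the hypothesis $4\nmid\tau(G)$ forces $t\le 1$, so the multiplicity bound ``$\le t+1$'' gives multiplicity at most~$2$; and the dichotomy ``if $Q(G)$ has an even integer eigenvalue of multiplicity $\ge 2$, then $4\mid\tau(G)$'' then collapses this to multiplicity exactly~$1$.

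For the congruence $\mu\equiv0\pmod4$ and the uniqueness, items (ii) and (iii) of the abstract deliver exactly these conclusions when $|V(G)|$ is odd, so the main difficulty is reducing the general case to the odd-order case. A natural reduction is to attach a pendant vertex to $G$, which preserves $\tau(G)$ while flipping the parity of the vertex count; the key technical point is to show that such an operation does not erase the even integer eigenvalues of $Q$ nor alter their residues mod~$4$, so that (ii) and (iii) may then be applied to the augmented graph. Carrying out this transfer---effectively a congruential tracking of how $Q$-eigenvalues behave under elementary graph modifications---is the principal obstacle; an alternative route would be a direct $\pmod 4$ analysis of the characteristic polynomial of $Q(G)$ via Cauchy--Binet, exploiting that every $n$-edge spanning subgraph of $G$ has all components unicyclic and contributes a power of~$4$ weighted by the number of odd-cycle components.
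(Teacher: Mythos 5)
Your first step---identifying the nonzero spectra of $A(\li(G))+2I=X^\p X$ and $Q(G)=XX^\p$, so that $\lambda\neq-2$ corresponds to $\mu=\lambda+2\neq0$ with $\lambda\equiv2\pmod4$ iff $\mu\equiv0\pmod4$---is exactly the paper's reduction, and your shortcut for simplicity (multiplicity $\le t+1$ with $t\le1$, then the ``multiplicity $\ge2$ forces $4\mid\tau(G)$'' theorem) is a legitimate, non-circular way to get that one clause from the paper's other results. But the heart of the theorem, namely the congruence $\lambda\equiv2\pmod4$ and the uniqueness of such an eigenvalue, is precisely the content of the odd-order result (Theorem~\ref{nodd}(ii)--(iii)), and your proposal neither proves it nor successfully reduces to it. The pendant-vertex reduction you describe cannot be carried out: the claim that attaching a pendant vertex ``does not erase the even integer eigenvalues of $Q$ nor alter their residues mod~4'' is false. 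For example $Q(P_4)$ has the eigenvalue $2\equiv2\pmod4$ while $\tau(P_4)=1$, yet the $5$-vertex tree obtained by adding a pendant vertex has no $Q$-eigenvalue $\equiv2\pmod4$ (as the odd-order theorem forces); so the transfer you need is not merely unproven but impossible. The same example shows that the general-order statement you set out to prove is itself false: $A(\li(P_4))=A(P_3)$ has the even eigenvalue $0\not\equiv2\pmod4$ with $\tau(P_4)=1$. The paper reads Theorem~\ref{mult2line} with the odd-order hypothesis (as stated in the abstract and used in the corollary) and obtains it immediately from Theorem~\ref{nodd} via the spectral correspondence; there is no even-order case to reduce.

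Consequently the actual mathematical work of the paper's proof is entirely missing from your proposal: the mod-$4$ analysis of the coefficients of $p_L$ and $p_Q$ through the (TU-)subgraph expansions of Theorem~\ref{coef} (parity of $\ell_{n-2}$ when $n$ is odd, the parity count of spanning odd-unicyclic subgraphs giving $p_Q(x)=p_L(x)+4f(x)$ with even constant term of $f$), and for part~(iii) the divisibility argument $\lambda\mid\det(Q_1)$ combined with $\det(Q_1)\equiv\det(L_1)\equiv\tau(G)\pmod4$ from Lemma~\ref{coefnn}. Your closing ``alternative route'' via a direct Cauchy--Binet mod-$4$ computation points in the direction of this argument but is only a gesture; as written, the proposal cites the announced results where they apply and leaves the decisive congruence and uniqueness claims unestablished.
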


\begin{cor}  If a graph $G$ has odd order and $\tau(G)$ is not divisible by $4$, then $A(\li(G))$ is nonsingular.
\end{cor}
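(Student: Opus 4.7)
My approach is to read the corollary as a direct specialization of Theorem~\ref{mult2line} to the eigenvalue $\la = 0$. The plan is to argue by contradiction: I will assume that $A(\li(G))$ is singular, so that $0$ appears among its eigenvalues, and then extract a contradiction from Theorem~\ref{mult2line}.

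The single key step is the observation that $0$ is an even integer different from $-2$. Under the hypothesis that $\tau(G)$ is not divisible by $4$, Theorem~\ref{mult2line} would then force the eigenvalue $\la = 0$ to satisfy $0\equiv 2\pmod 4$, which is false. This contradiction shows that $0$ cannot be an eigenvalue of $A(\li(G))$, i.e., $A(\li(G))$ is nonsingular.

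I do not anticipate any real obstacle here; the corollary is a one-line consequence of Theorem~\ref{mult2line}. It is worth noting that the odd-order hypothesis on $G$ does not actually enter this argument, since $\tau(G)\not\equiv 0\pmod 4$ alone is enough to exclude $0$ via the theorem. The condition $|V(G)|$ odd is presumably retained so as to frame the result as the common generalization of the classical nullity theorems of Doob and of Gutman--Sciriha for $A(\li(T))$ with $T$ a tree of odd order, which this corollary is explicitly designed to extend.
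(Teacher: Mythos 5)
Your derivation---specializing Theorem~\ref{mult2line} to the even eigenvalue $\la=0\ne-2$---is exactly the intended one-line proof; equivalently, one can apply Theorem~\ref{nodd}\,(ii) to $Q(G)$, using the fact recorded in Section~2 that the nullity of $A(\li(G))$ equals the multiplicity of $2$ as an eigenvalue of $Q(G)$, and $2\equiv2\pmod4$.

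However, your closing remark that the odd-order hypothesis ``does not actually enter'' and is retained only for presentation is wrong, and it touches a real subtlety. Theorem~\ref{mult2line} is proved by translating it, via $A(\li(G))+2I=X^\p X$, into Theorem~\ref{nodd}, whose hypotheses include that $G$ has odd order; that hypothesis must be carried along (as literally printed in the introduction, Theorem~\ref{mult2line} omits it, but without it the statement is false, so your reliance on the printed version is what makes the hypothesis seem idle). Indeed the corollary itself fails for even order: $G=P_2$ has $\tau(G)=1$, yet $A(\li(P_2))$ is the $1\times1$ zero matrix; likewise $G=P_4$ has $\tau(G)=1$ and $A(\li(P_4))=A(P_3)$ is singular. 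This is consistent with the results of Gutman--Sciriha and Bapat cited in the introduction, which permit singular $A(\li(G))$ with odd $\tau(G)$ precisely when the order is even. So the assumption that $G$ has odd order is essential, and your argument uses it implicitly through the (odd-order) version of Theorem~\ref{mult2line} that the paper actually proves.
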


\begin{thm}\label{oddline} If $A(\li(G))$ has an even integer eigenvalue $\la\ne-2$ of multiplicity at least $2$, then $\tau(G)$ is divisible by $4$.
\end{thm}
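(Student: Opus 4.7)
My plan is to observe that Theorem~\ref{oddline} is essentially the contrapositive of the simplicity clause of Theorem~\ref{mult2line}. That theorem asserts that, under the hypothesis $4\nmid\tau(G)$, every even integer eigenvalue $\la\neq-2$ of $A(\li(G))$ is simple. Negating both sides of this implication yields precisely Theorem~\ref{oddline}.

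To write out the proof I would argue by contradiction. Suppose $\la\neq-2$ is an even integer eigenvalue of $A(\li(G))$ of multiplicity at least $2$ while $4\nmid\tau(G)$. Applying Theorem~\ref{mult2line} under the assumption $4\nmid\tau(G)$, the eigenvalue $\la$ must be simple, contradicting the multiplicity hypothesis. Hence $4\mid\tau(G)$.

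So there is no real obstacle in this theorem itself: it is a one-line deduction from Theorem~\ref{mult2line}. It is worth noting that the weaker Theorem~\ref{t+1} alone would not suffice, since it would only give $t+1\ge 2$, i.e.\ $2\mid\tau(G)$, whereas divisibility by $4$ requires the sharper conclusion of Theorem~\ref{mult2line}.

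The substantive work, therefore, lies entirely in Theorem~\ref{mult2line}. For that, the natural plan is to transfer the problem from $A(\li(G))$ to the signless Laplacian $Q(G)$ via the standard identities $Q(G)=BB^\top$ and $A(\li(G))=B^\top B-2I$ (with $B$ the vertex--edge incidence matrix), under which an even integer eigenvalue $\la\neq-2$ of $A(\li(G))$ corresponds to an even integer eigenvalue $\la+2\neq0$ of $Q(G)$ of the same multiplicity. The hard part is then a $2$-adic analysis of characteristic polynomials (or equivalently an elementary-divisor/Smith-normal-form argument over $\mathbb{Z}_{(2)}$) that ties the multiplicities of such eigenvalues of $Q(G)$ to the $2$-adic valuation of $\tau(G)$ through the matrix-tree theorem.
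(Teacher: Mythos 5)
Your reduction hides a genuine gap. Theorem~\ref{mult2line} cannot be invoked as a black box covering graphs of every order: in the paper it is deduced from Theorem~\ref{nodd}, whose hypotheses include that $G$ has \emph{odd} order, and the unrestricted statement cannot be taken literally anyway --- for $G=K_2$ we have $\tau(G)=1$, yet $A(\li(G))=[0]$ has the even eigenvalue $0\not\equiv2\pmod4$. So the contrapositive of the simplicity clause of Theorem~\ref{mult2line} only yields Theorem~\ref{oddline} for odd-order graphs. The even-order case, which Theorem~\ref{oddline} must also cover, is precisely the new content here, and the paper supplies it by a separate argument (Theorem~\ref{mult2}): pass to the principal submatrices $L_1$ and $Q_1$ obtained by deleting one row and column; by interlacing a multiple even integer eigenvalue $\la$ of $L$ (or $Q$) is an eigenvalue of $L_1$ (or $Q_1$); the restricted-weight expansion of Lemma~\ref{coefnn} together with the factorization $p_L(x)=(x-\la)^2g(x)$ shows $\ell'_{n-2}$ is even; and then, since $\la$ is even, $0=p_{L_1}(\la)\equiv\ell'_{n-1}=(-1)^{n-1}\tau(G)\pmod4$ (with $p_{Q_1}=p_{L_1}+4f$ handling the signless case). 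Deriving Theorem~\ref{oddline} from Theorem~\ref{mult2line} is therefore either restricted to odd $n$ or circular, because the even-order simplicity clause of Theorem~\ref{mult2line} is logically the very statement you are trying to prove.

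Your closing sketch does not fill this hole. A Smith-normal-form / $2$-adic elementary-divisor argument is what the paper uses for Theorem~\ref{t+1Q}, and under the hypothesis $4\nmid\tau(G)$ it only bounds the multiplicity of an even integer eigenvalue of $Q(G)$ by $t+1\le2$; it gives neither simplicity nor any distinction between residues of $\la$ modulo $4$. The step that actually converts a multiplicity-two even eigenvalue into divisibility of $\tau(G)$ by $4$ is the coefficient analysis of $p_{L_1}$ and $p_{Q_1}$ via the restricted weights $W_1$ (Lemma~\ref{coefnn}) combined with the Matrix-Tree identity $\ell'_{n-1}=(-1)^{n-1}\tau(G)$, and this ingredient is absent from your proposal.
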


Since even integer eigenvalues of $A(\li(G))$ and the signless Laplacian matrix $Q(G)$ are the same modulo a shift (see Section~2) it is enough to consider those of  $Q(G)$ as we do in what follows.
The rest of the paper is organized as follows. In Section~2, we recall some necessary preliminaries. In Section~3, we give a simple proof for Doob's result which will be used later on.
In Section~4, the proofs of Theorems~\ref{t+1}, \ref{mult2line}, and \ref{oddline} in terms of $Q(G)$ are given along with some improvements and similar results for the eigenvalues of the Laplacian matrix $L(G)$.

\section{Preliminaries}

By $X=X(G)$ we denote the $0,1$ vertex-edge incidence matrix of $G$. If we orient each edge of $G$, then $D=D(G)$ will denote
the $0,\pm1$ vertex-edge incidence matrix of the resulting graph. The  Laplacian
matrix of $G$ is $L=L(G)=DD^\p$ and the  signless Laplacian matrix of $G$ is $Q=Q(G)=XX^\p$.
Note that the Laplacian does not depend on the orientation.
The matrices $L$ and $Q$ are positive semidefinite.
The incidence matrix of $G$ and the adjacency matrix of $\li(G)$ satisfy the following \cite[p.~18]{biggs}
\begin{equation}\label{A+2I}
A(\li(G))+2I=X^\p X.
\end{equation}
Recall that for a matrix $M$, the matrices $MM^\p$ and $M^\p M$ have the same nonzero eigenvalues with the same multiplicities.
This together with (\ref{A+2I}) implies that
the matrices $A(\li(G))+2I$ and $Q(G)$ have the same nonzero eigenvalues with the same multiplicities.
In particular, the multiplicity of eigenvalue $2$ for $Q(G)$ is the same as the nullity of $A(\li(G))$ \cite{gut}.
Therefore, studying even integer eigenvalues of $A(\li(G))$ and those of $Q(G)$ are equivalent.

 We denote the vertex set and the edge set of $G$ by $V(G)$ and $E(G)$, respectively.
 If $S\subseteq E(G)$, then $\langle S\rangle$ denotes the induced subgraph on $S$.
 For a matrix $M$ with $R,S$ being subsets of row and column indices of $M$, respectively, we denote the submatrix with row indices from $R$ and column indices from $S$ by $M(R,S)$.

The following two lemmas describe the invertible submatrices of $D$ and $X$. For the first one we refer to pp. 32 and 47 of \cite{biggs} and for the second one to p. 20 of \cite{bapatBook}.

\begin{lem} \label{invertD} Let $G$ be a graph and $R\subseteq V(G)$, $S\subseteq E(G)$ with $|R|=|S|\ge1$.
Let $V_0$ denote the vertex set of $\langle S\rangle$.  Then $D(R,S)$ is invertible if and only if the
following conditions are satisfied:
\begin{itemize}
  \item[\rm(i)] $R$ is a subset of $V_0$.
  \item[\rm(ii)]$\langle S\rangle$ is a forest.
  \item[\rm(iii)]$V_0\setminus R$ contains precisely one vertex from each connected component
  of $\langle S\rangle$.
\end{itemize}
Moreover, if $D(R,S)$ is invertible, then $\det(D(R,S))=\pm1$.
\end{lem}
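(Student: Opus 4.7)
The plan is to prove both directions by exploiting elementary row/column operations and the structure of the oriented incidence matrix. For necessity, I would show that if any of (i)--(iii) fails, then $D(R,S)$ has either a zero row or a nontrivial linear dependence among its rows or columns. Specifically: if (i) fails, some $r\in R$ is incident to no edge in $S$, so the row of $r$ in $D(R,S)$ vanishes. If (ii) fails, $\langle S\rangle$ contains a cycle whose edges, signed according to a traversal, produce a column dependence in the full matrix $D$, and this dependence persists after restricting rows to $R$. When (ii) does hold, the counting identity $|S|=|V_0|-k$ (with $k$ the number of components of the forest $\langle S\rangle$), combined with $|R|=|S|$ and $R\subseteq V_0$, forces $|V_0\setminus R|=k$. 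The only remaining way for (iii) to fail is that some component $C$ is entirely contained in $R$, in which case the rows of $D$ indexed by $V(C)$ sum to zero on all columns of $S$, contradicting invertibility.

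For sufficiency, under (i)--(iii) the matrix $D(R,S)$ becomes block-diagonal after reordering rows and columns according to the components $C_1,\dots,C_k$ of $\langle S\rangle$, since every edge of $E(C_i)$ is incident only to vertices of $V(C_i)$. The $i$-th block is $D(R\cap V(C_i),E(C_i))$, a square matrix whose rows correspond to all but one vertex of the tree $C_i$. It therefore suffices to prove that for any tree $T$ with a distinguished vertex $v^{\ast}$, the matrix $D(V(T)\setminus\{v^{\ast}\},E(T))$ has determinant $\pm1$. I would prove this by induction on $|V(T)|$: the base case $|V(T)|=1$ gives the empty matrix with determinant $1$; for the inductive step, pick a leaf $\ell\ne v^{\ast}$ of $T$, which exists because any tree on at least two vertices has at least two leaves. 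The row of $\ell$ has a single nonzero entry, equal to $\pm1$, in the column of its unique incident edge. Cofactor expansion along this row reduces the determinant to $\pm1$ times the determinant of the block for the smaller tree $T-\ell$ with the same distinguished vertex, which is $\pm1$ by the induction hypothesis.

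The main obstacle is essentially bookkeeping: making the necessity case analysis exhaustive by using the identity $|S|=|V_0|-k$ to show that failure of (iii) under (i)--(ii) really reduces to the presence of a component fully inside $R$, and then confirming that the block-diagonal reordering in the sufficiency step correctly matches rows of each tree with its own edges. The substantive inputs—cycle dependences, block-decomposition by components, and leaf-pruning induction—are classical, and the conclusion $\det(D(R,S))=\pm1$ falls out of the induction automatically.
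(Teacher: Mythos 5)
Your proof is correct and complete. Note that the paper does not prove this lemma at all---it is quoted as a known result with a citation to Biggs's \emph{Algebraic Graph Theory}---so there is no in-paper argument to compare against; your argument is essentially the classical one found in that reference. The necessity case analysis is handled properly: the zero row when (i) fails, the signed-cycle column dependence when (ii) fails (which indeed survives restriction of the rows to $R$), and the counting identity $|S|=|V_0|-k$ correctly reduces failure of (iii), given (i) and (ii), to a component $C$ with $V(C)\subseteq R$, whose rows sum to zero across all of $S$ because edges of other components meet no vertex of $C$. The sufficiency step via block-diagonalization by components and leaf-pruning induction (with the empty matrix as base case) yields invertibility and $\det=\pm1$ simultaneously, as claimed.
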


\begin{lem} \label{invertX} Let $G$ be a graph and $R\subseteq V(G)$, $S\subseteq E(G)$ with $|R|=|S|\ge1$.
Let $V_0$ denote the vertex set of $\langle S\rangle$. Then $X(R,S)$ is invertible if and only if the
following conditions are satisfied:
\begin{itemize}
  \item[\rm(i)] $R$ is a subset of $V_0$.
  \item[\rm(ii)] each connected component of $\langle S\rangle$ is either a tree or a unicyclic graph with odd cycle.
  \item[\rm(iii)]$V_0\setminus R$ contains precisely one vertex from each tree in $\langle S\rangle$.
\end{itemize}
Moreover, if $X(R,S)$ is invertible, then  $\det(X(R,S))=\pm2^c$ where $c$ is the number of components of $\langle S\rangle$ which are unicyclic with odd cycle.
\end{lem}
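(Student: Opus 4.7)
The plan is to decompose $X(R,S)$ along the connected components of $\langle S\rangle$, reduce to a per-component analysis, and then handle tree, even-cycle unicyclic, and odd-cycle unicyclic components separately. First, if $R\not\subseteq V_0$ then some row of $X(R,S)$ is identically zero, so condition (i) is necessary; assume it. Let the components of $\langle S\rangle$ be $(V_1,S_1),\dots,(V_k,S_k)$ and set $R_i=R\cap V_i$. Because edges of $S_i$ are incident only to vertices of $V_i$, after a suitable reordering of rows and columns $X(R,S)$ is block-diagonal with blocks $X(R_i,S_i)$, so $X(R,S)$ is invertible if and only if every block is square and invertible, and
\[
\det X(R,S)=\pm\prod_{i=1}^{k}\det X(R_i,S_i).
\]

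Next, the squareness condition $|R_i|=|S_i|$, combined with $|V_i|-1\le|S_i|$ (connectedness of the component) and $|R_i|\le|V_i|$, forces $|S_i|\in\{|V_i|-1,|V_i|\}$, so every $\langle S_i\rangle$ is either a tree (with $|V_i\setminus R_i|=1$, the tree part of (iii)) or a unicyclic graph (with $R_i=V_i$). For a tree component, bipartiteness permits orienting all edges from one part of the bipartition to the other; after flipping the signs of the rows indexed by one part, $X(R_i,S_i)$ and $D(R_i,S_i)$ agree up to a global sign, so Lemma~\ref{invertD} gives $|\det X(R_i,S_i)|=1$. The same bipartite trick applied to a unicyclic component with even cycle (still bipartite) yields $|\det X(V_i,S_i)|=|\det D(V_i,S_i)|=0$, since the signed incidence matrix of a connected graph has rank $|V_i|-1$. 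This forces the cycle of every unicyclic component to be odd, establishing condition (ii).

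It remains to show $|\det X(V_i,S_i)|=2$ for a unicyclic component with odd cycle, which I will do by induction on the number of pendant vertices. If $v$ is a pendant with incident edge $e$, the row of $v$ in $X(V_i,S_i)$ has a single nonzero entry (in column $e$); cofactor expansion gives $|\det X(V_i,S_i)|=|\det X(V_i\setminus\{v\},S_i\setminus\{e\})|$, and the remaining graph is still unicyclic with the same odd cycle. The base case is a single odd cycle $C_\ell$: with a natural labeling $X(C_\ell)$ is a circulant matrix with eigenvalues $1+\omega^s$ for $s=0,\dots,\ell-1$, where $\omega$ is a primitive $\ell$-th root of unity; evaluating the identity $\prod_{s=0}^{\ell-1}(x-\omega^s)=x^\ell-1$ at $x=-1$ yields $\prod_s(1+\omega^s)=1-(-1)^\ell=2$ for $\ell$ odd. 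Multiplying over all blocks gives $|\det X(R,S)|=2^c$, as claimed. The main obstacle is this odd-cycle base case; once it is in hand the rest is routine combinatorial bookkeeping. The circulant computation is the cleanest route, though one can equivalently read $\det X(C_\ell)$ as a signed sum over the two perfect matchings of the bipartite biadjacency graph of $X(C_\ell)$ (itself a $2\ell$-cycle), whose signs agree precisely when $\ell$ is odd.
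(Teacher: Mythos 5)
The paper does not prove this lemma at all: it is quoted as a known result with a pointer to p.~30 of Bapat's \emph{Graphs and Matrices}, so there is no in-paper argument to compare against. Your proof is a correct, self-contained substitute, and it follows the standard route of the cited source: zero rows force (i); the block decomposition over components of $\langle S\rangle$ reduces everything to square blocks; the edge-count dichotomy $|S_i|\in\{|V_i|-1,|V_i|\}$ yields the tree/unicyclic structure and the ``one omitted vertex per tree'' condition; bipartiteness plus row sign flips transfers the tree case to Lemma~\ref{invertD} and kills the even-cycle case (rank $|V_i|-1$); and pendant deletion plus the circulant computation $\prod_{s}(1+\omega^s)=1-(-1)^{\ell}$ gives $\pm2$ per odd-unicyclic component, hence $\pm2^c$ overall. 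The only place you gloss is the converse direction: conditions (i)--(iii) say nothing explicitly about $R_i=V_i$ on the unicyclic components, and you need the counting $|V_0\setminus R|=|V_0|-|S|=\#\{\text{tree components}\}$ to conclude that all omitted vertices are accounted for by the trees, so every block is indeed square. That is genuinely routine and does not affect correctness, but it is worth one sentence if you write this up.
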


 The nullity of $L(G)$ and $Q(G)$ are respectively equal to the number of components and to the number of bipartite components of $G$.
Let
$$p_Q(x)=x^n+q_1x^{n-1}+\cdots+q_n,~~~p_L(x)=x^n+\ell_1x^{n-1}+\cdots+\ell_{n-1}x$$
be the characteristic polynomials of $Q$ and $L$, respectively.
A spanning subgraph of $G$ whose components are trees or unicyclic graphs with odd cycles is called a
{\em TU-subgraph} of $G$. Suppose that a TU-subgraph $H$ of $G$ contain $c$ unicyclic graphs and trees
$T_1, T_2,\ldots, T_s$. Then the weight $W(H)$ of $H$ is defined by $$W(H) = 4^c\prod_{i=1}^s(1 + e(T_i )),$$
where $e(T_i)$ denotes the number of edges of $T_i$.
The weight of an acyclic subgraph, that is, a union of trees, is defined similarly with $c=0$.
We shall express the coefficients of $p_Q(x)$ and $p_L(x)$ in terms of the weights of TU-subgraphs and acyclic subgraphs of $G$.

 By the Matrix-Tree Theorem, for any $1\le i,j\le n$,
$\tau(G)$ is equal to $(-1)^{i+j}$ times the determinant of the submatrix of $L(G)$ obtained by eliminating the $i$th row and $j$th column.
Consideration the trace of the adjugate of $L(G)$ yields that $\ell_{n-1}=(-1)^{n-1}n\tau(G)$.
The first part of the following theorem which is a generalization of the Matrix-Tree Theorem has appeared in \cite{kel} (see also \cite[p.~193]{crsB}). The second part was proved in \cite{dedo} (see also \cite{crs}).
\begin{thm}\label{coef} The coefficients of $p_L(x)$ and $p_Q(x)$ are determined as follows.
\begin{itemize}
  \item[\rm(i)] $\ell_j=(-1)^j\sum_{F_j}W(F_j)$, for $j=1,\ldots,n-1$,
where the summation runs over all acyclic subgraphs $F_j$ of $G$ with $j$ edges.
  \item[\rm(ii)] $q_j=(-1)^j\sum_{H_j}W(H_j)$,  for $j=1,\ldots,n$,
where the summation runs over all TU-subgraphs $H_j$ of $G$ with $j$ edges.
\end{itemize}
\end{thm}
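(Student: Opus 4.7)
The plan is to apply the Cauchy--Binet formula to the factorizations $L=DD^\p$ and $Q=XX^\p$ and then to read off which rectangular subdeterminants survive using Lemmas~\ref{invertD} and~\ref{invertX}. Both parts are handled by the same template; only the determinants of the admissible submatrices differ.

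For part (i), the coefficient $\ell_j$ equals $(-1)^j$ times the $j$-th elementary symmetric function of the eigenvalues of $L$, which in turn equals $(-1)^j$ times the sum of the $j\times j$ principal minors of $L$. For each $R\subseteq V(G)$ with $|R|=j$, Cauchy--Binet applied to $L(R,R)=D(R,E(G))\,D(R,E(G))^\p$ gives
$$\det L(R,R)=\sum_{S\subseteq E(G),\,|S|=j}\bigl(\det D(R,S)\bigr)^2.$$
By Lemma~\ref{invertD}, such a pair $(R,S)$ contributes $1$ precisely when $\langle S\rangle$ is a forest whose vertex set $V_0$ contains $R$ with $V_0\setminus R$ selecting one vertex from each component, and contributes $0$ otherwise. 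Reversing the order of summation, for each acyclic subgraph $F_j$ with tree components $T_1,\ldots,T_s$ the number of valid $R$'s is $\prod_{i=1}^s(1+e(T_i))=W(F_j)$, which yields $\ell_j=(-1)^j\sum_{F_j}W(F_j)$.

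Part (ii) follows the same route with $D$ replaced by $X$. Cauchy--Binet applied to $Q(R,R)$ produces $\sum_S\bigl(\det X(R,S)\bigr)^2$, and Lemma~\ref{invertX} identifies the nonzero terms as those in which $\langle S\rangle$ is a TU-subgraph admitting $R$ as prescribed there, contributing $(\pm 2^c)^2=4^c$, where $c$ is the number of odd-unicyclic components. Swapping the summations, a fixed TU-subgraph $H_j$ with tree components $T_1,\ldots,T_s$ and $c$ odd-unicyclic components admits $\prod_{i=1}^s(1+e(T_i))$ valid $R$'s, because each odd-unicyclic component satisfies $|V|=|E|$ and so forces all its vertices into $R$, leaving no choice there. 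The total contribution is then $4^c\prod_{i=1}^s(1+e(T_i))=W(H_j)$, giving $q_j=(-1)^j\sum_{H_j}W(H_j)$.

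The main subtlety, and essentially the only nontrivial step, lies in the combinatorial bookkeeping for (ii): one must verify that odd-unicyclic components contribute solely through the $4^c$ factor arising from the squared determinants, while the product $\prod_{i=1}^s(1+e(T_i))$ records only the vertex-removal choices across the tree components. Once this is carefully isolated, both identities emerge from a single Cauchy--Binet computation combined with the structural descriptions in Lemmas~\ref{invertD} and~\ref{invertX}.
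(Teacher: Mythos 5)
Your proof is correct: the Cauchy--Binet expansion of the principal minors of $L=DD^\p$ and $Q=XX^\p$, filtered through Lemmas~\ref{invertD} and~\ref{invertX} and then resummed over edge sets, is exactly the standard argument, and your cardinality observation that odd-unicyclic components leave no vertex-removal choices is the right key point. The paper itself only cites \cite{kel} and \cite{dedo} for Theorem~\ref{coef}, but your argument is essentially identical to the paper's own proof of the analogous Lemma~\ref{coefnn}, so no further comparison is needed (only note the harmless notational slip: the paper's Section~2 calls the coefficients of $p_Q$ by $q_j$, while the theorem statement writes $p_j$).
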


We close this section by stating the following well known lemma for later use.
\begin{lem}\label{princ} Any symmetric matrix of rank $r$ (over any field) has a
principal $r\times r$ submatrix of full rank.
\end{lem}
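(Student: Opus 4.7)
My plan is to pick a set $R$ of $r$ row indices for which the corresponding rows of $M$ are linearly independent (such $R$ exists since $\mathrm{rank}(M)=r$), and then to show that the principal submatrix $M(R,R)$ is itself nonsingular. The key leverage is symmetry: because $M=M^\p$, the columns of $M$ indexed by $R$ are the transposes of the independent rows indexed by $R$ and hence are also linearly independent. So the $n\times r$ matrix $M(\cdot,R)$ has rank $r$; the content of the lemma is that in the symmetric case one can take the same row and column index set.

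The heart of the argument will be a contradiction. Suppose $M(R,R)$ were singular. Then I can choose a nonzero vector $c$ indexed by $R$ with $M(R,R)c=0$. Extend $c$ to $\tilde c\in\mathbb{F}^n$ by padding with zeros on indices outside $R$. I will show that $M\tilde c=0$. For $i\in R$ this is immediate: $(M\tilde c)_i=(M(R,R)c)_i=0$. For $i\notin R$, the row $i$ of $M$ lies in the span of the rows indexed by $R$ (those rows span the whole row space), so writing row $i$ as $\sum_{k\in R}\alpha_{ik}\,\mathrm{row}_k(M)$ gives $(M\tilde c)_i=\sum_{k\in R}\alpha_{ik}(M(R,R)c)_k=0$.

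Having established $M\tilde c=0$, I invoke symmetry once more: $\tilde c^\p M=(M\tilde c)^\p=0$. But $\tilde c$ is supported on $R$, so $\tilde c^\p M=\sum_{i\in R}c_i\,\mathrm{row}_i(M)$, a nontrivial linear combination of linearly independent rows, which cannot vanish. This contradiction forces $M(R,R)$ to have full rank $r$, completing the proof.

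The main obstacle I anticipate is not computational but conceptual: the naive approach picks $R$ for row-independence and a possibly different $S$ for column-independence, yielding an invertible $M(R,S)$ that need not be principal. The symmetric passage from the column-dependence in $M(R,R)$ all the way out to the identity $\tilde c^\p M=0$ on the full matrix is what forces $R=S$ to work, and making that passage rigorous is the only subtle point.
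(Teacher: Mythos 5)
Your proof is correct and complete: taking $R$ to index $r$ linearly independent rows, the zero-padding argument shows $M\tilde c=0$, and the single use of symmetry in $\tilde c^{\top}M=(M\tilde c)^{\top}$ then contradicts the independence of the rows indexed by $R$, so $M(R,R)$ is nonsingular; nothing in the argument goes beyond the field axioms, so it is valid over any field, including characteristic~$2$. Note that the paper states Lemma~\ref{princ} as well known and supplies no proof, so there is no argument to compare against; yours is the standard one and fully establishes the statement as used later in the paper.
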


\section{Binary rank of line graphs}

In this section we give a simple proof of Doob's result. For a matrix $M$, we use the notation $\rk_2(M)$ to denote the binary rank (the rank over the two-element field) of $M$.

\begin{thm}\label{doob} {\rm(Doob \cite{doob})} Let $G$ be a connected graph of order $n$ and $\A=A(\li(G))$. Then
${\rm rank}_2(\A)$ is equal to $n-1$  if $n$ is odd, and $n-2$ if $n$ is even.
\end{thm}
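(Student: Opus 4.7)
My plan is to work entirely over $\mathbb{F}_2$. By identity~(\ref{A+2I}) we have $A(\li(G))\equiv X^\p X\pmod 2$, so the task reduces to computing $\rk_2(X^\p X)$, and I would do this by describing its $\mathbb{F}_2$-null space. Observe that $y\in\mathbb{F}_2^m$ lies in this null space iff $z:=Xy\in\mathbb{F}_2^n$ satisfies $X^\p z=0$. Since every column of $X$ has exactly two ones, $X^\p z=0$ reads $z_u+z_v=0$ for every edge $uv$, forcing $z$ to be constant on each component of $G$; connectedness then forces $z\in\{\mathbf{0},\mathbf{1}\}$.

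Next I would translate this combinatorially. Viewing $y$ as the indicator vector of an edge set $S\subseteq E(G)$, the coordinate $(Xy)_v$ is the parity of the degree of $v$ in $\langle S\rangle$. The null space of $X^\p X$ therefore splits into two parts: an \emph{even} part, those $y$ for which every vertex has even degree in $\langle S\rangle$, which is precisely the cycle space of $G$ over $\mathbb{F}_2$ and has dimension $m-n+1$ (where $m=|E(G)|$); and an \emph{odd} part, those $y$ for which every vertex has odd degree in $\langle S\rangle$, which is either empty or a coset of the cycle space contributing exactly one extra dimension.

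If $n$ is odd, the odd part is empty by the handshake lemma, so the nullity is $m-n+1$ and hence $\rk_2(X^\p X)=n-1$. If $n$ is even, the crux, and the step I expect to be the main obstacle, is to exhibit an edge subset $S\subseteq E(G)$ in which every vertex of $G$ has odd degree. My plan for this is a classical construction inside a spanning tree $T$ of $G$: arbitrarily pair the $n$ vertices into $n/2$ pairs, let $P_i$ denote the unique $T$-path joining the $i$th pair, and set $S=P_1\triangle\cdots\triangle P_{n/2}$. Using $\deg_S(v)\equiv\sum_i\deg_{P_i}(v)\pmod 2$ together with the fact that each $v$ is an endpoint of exactly one $P_i$ (contributing $1$) and internal to the others that pass through it (each contributing $2$), every vertex has odd degree in $S$. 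This extra dimension gives nullity $m-n+2$, so $\rk_2(X^\p X)=n-2$, completing the argument.
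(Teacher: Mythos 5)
Your argument is correct, and it follows a genuinely different route from the paper. The paper stays in the determinantal world: it combines Lemma~\ref{invertX} (the description of nonsingular square submatrices of $X$) with the Binet--Cauchy theorem to evaluate the binary determinants of the principal $(n-1)\times(n-1)$ submatrices of $\A$ (getting $n$ when the indexing edge set is a spanning tree and $0$ otherwise), and then uses Lemma~\ref{princ} to convert the presence or absence of an odd principal minor into the rank statement, handling even $n$ by passing to the line graph of a subtree with $n-2$ edges. You instead compute the $\mathbb{F}_2$-nullspace of $X^\p X$ directly: since $\ker_2(X^\p)=\{\mathbf{0},\1\}$ for connected $G$, the nullspace splits into the cycle space (dimension $m-n+1$) plus, exactly when an all-odd-degree spanning subgraph exists, one further dimension coming from a coset of the cycle space; the handshake lemma rules this out for odd $n$, and your spanning-tree pairing construction (a $V$-join built as a symmetric difference of tree paths) supplies it for even $n$. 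Each step checks out, including the coset argument (two solutions of $Xy=\1$ differ by a cycle-space element, and any solution lies outside $\ker_2 X$, so the nullity jumps by exactly one). What your approach buys is a more elementary, self-contained proof that avoids Lemma~\ref{invertX}, Binet--Cauchy, and Lemma~\ref{princ}, and it moreover identifies the nullspace of $\A$ over $\mathbb{F}_2$ combinatorially; what the paper's approach buys is the finer information about \emph{which} principal submatrices are nonsingular mod $2$ (those indexed by spanning trees), a computation whose Binet--Cauchy template is reused in Lemma~\ref{coefnn} and the later parity arguments.
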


\begin{proof}{
If $S$ is the edge set of a spanning tree and $R$ is any set of $n-1$ vertices of $G$, then by Lemma~\ref{invertX},
$\det(X(R,S))=\pm1$. Hence, $\rk_2(X)\ge n-1$. In fact we have equality since the rows of $X$ sum up to the all 2 vector.
From (\ref{A+2I}), it follows that $\rk_2(\A)\le\rk_2(X)=n-1$.
Let $S\subseteq E(G)$ with $|S|=n-1$. By the Binet--Cauchy Theorem and Lemma~\ref{invertX},
\begin{align*}
\det(\A(S,S))&\equiv\det((X^\p X)(S,S))\pmod{2}\\
&=\sum_{R\subseteq V(G),\,|R|=n-1}\det(X(R,S))^2
=\left\{\begin{array}{ll} n & \hbox{if $\langle S\rangle$ is a tree,} \\0 & \hbox{otherwise.}\end{array}\right.
\end{align*}
This shows that $\A$ has a principal submatrix of order $n-1$ with full binary rank if $n$ is odd and does not if $n$ is even.
This proves the theorem for odd $n$.
Assume that $n$ is even. The above argument together with Lemma~\ref{princ} show that $\A$ has no principal submatrices of order $n-1$ with full binary rank.
Thus  $\rk_2(\A)\le n-2$.
Let $T$ be a subtree of $G$ with $n-2$ edges. Then the adjacency matrix ${\cal B}$ of $\li(T)$ is a principal submatrix of $\A$ and further more  ${\cal B}$ has full binary rank by applying the same argument as for odd $n$.
This shows that $\rk_2(\A)=n-2$.
}\end{proof}

\section{Even integer eigenvalues of Laplacian and signless Laplacian}

In this section we demonstrate the interconnection between the number of spanning trees of a graph $G$ and the even integer eigenvalues of $L(G)$ and $Q(G)$.
In view of the fact that the matrices $A(\li(G))+2I$ and $Q(G)$ have the same nonzero eigenvalues,  Theorems~\ref{t+1}, \ref{mult2line}, and \ref{oddline} follow respectively from Theorems~\ref{t+1Q}, \ref{nodd}, and \ref{mult2} below.

\begin{thm}\label{t+1Q} Let $G$ be a connected graph having $2^ts$ spanning trees with $s$ odd. Then the multiplicity of any even integer $\la$ as an eigenvalue of $Q(G)$ or $L(G)$ is at most $t+1$.
\end{thm}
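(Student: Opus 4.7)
My plan is to reduce the multiplicity question to a rank inequality over $\mathbb{F}_2$ and then extract that inequality from the Smith normal form of $L(G)$ combined with the Matrix--Tree Theorem. Since the entries of $D$ and $X$ agree modulo $2$, $L = DD^\p \equiv XX^\p = Q \pmod{2}$, so $\rk_2(L) = \rk_2(Q)$. Because $\lambda$ is even, $\lambda I \equiv 0 \pmod{2}$, whence $\rk_2(Q - \lambda I) = \rk_2(Q)$ as well. For any integer matrix $M$ the $\mathbb{Q}$-rank dominates the $\mathbb{F}_2$-rank, since any minor that is nonzero mod $2$ is a nonzero integer. Hence the multiplicity of $\lambda$ as an eigenvalue of $Q$ equals
\[
n - \rk_{\mathbb{Q}}(Q - \lambda I) \leq n - \rk_2(Q - \lambda I) = n - \rk_2(L),
\]
so it suffices to establish $\rk_2(L) \geq n - 1 - t$.

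For this rank bound I use the integer Smith normal form of $L$. Since $G$ is connected, $L$ has $\mathbb{Q}$-rank $n - 1$, and its Smith normal form is $\mathrm{diag}(a_1, a_2, \ldots, a_{n-1}, 0)$ with $a_1 \mid a_2 \mid \cdots \mid a_{n-1}$. A standard property of the Smith normal form is that the product $a_1 a_2 \cdots a_{n-1}$ equals the greatest common divisor of all $(n-1) \times (n-1)$ minors of $L$. By the Matrix--Tree Theorem recalled in Section~2, every such minor of $L$ equals $\pm\tau(G)$; hence this gcd is exactly $\tau(G) = 2^t s$. Taking $2$-adic valuations gives $\sum_{i=1}^{n-1} v_2(a_i) = t$, and since each even $a_i$ contributes at least $1$ to this sum, at most $t$ of the $a_i$ are even. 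Consequently at least $(n-1) - t$ of them are odd, and only odd invariant factors contribute to the $\mathbb{F}_2$-rank, so $\rk_2(L) \geq n - 1 - t$, which combined with the reduction above finishes the proof.

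There is no real combinatorial obstacle; the content lies in packaging three viewpoints (multiplicity, $\mathbb{F}_2$-rank, Smith form) together. The essential subtlety is to exploit the \emph{exact} identity that every $(n-1)$-minor of $L$ equals $\pm\tau(G)$, which pins down $\prod_i a_i$ as $\tau(G)$ on the nose. A weaker argument that only used divisibility by something like $n\tau(G)$ would yield $\rk_2(L) \geq n - 1 - v_2(n) - t$, which is insufficient when $n$ is even. Thus the Matrix--Tree Theorem is being used in its full, sharp form.
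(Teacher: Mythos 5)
Your proof is correct, and while its first half coincides with the paper's, the finishing step is genuinely different. Like the paper, you obtain $\rk_2(L)\ge n-1-t$ from the Smith normal form of $L$ together with the Matrix--Tree Theorem (the product of the invariant factors is the gcd of the $(n-1)\times(n-1)$ minors, which is exactly $\tau(G)=2^ts$, so at most $t$ invariant factors are even). Where you diverge is in how this binary rank bound is turned into a multiplicity bound: the paper invokes Lemma~\ref{princ} to extract a principal $(n-t-1)\times(n-t-1)$ submatrix $B$ of $Q$ with odd determinant, uses Cauchy interlacing to force an even eigenvalue of multiplicity at least $t+2$ to be an eigenvalue of $B$, and then gets a contradiction from $\la\mid\det(B)$ forcing $\det(B)$ to be even. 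You instead note that, since $Q$ is symmetric and $\la$ is an even integer,
\[
\mathrm{mult}(\la)=n-{\rm rank}_{\mathbb{Q}}(Q-\la I)\le n-\rk_2(Q-\la I)=n-\rk_2(Q)=n-\rk_2(L)\le t+1,
\]
because rank can only drop under reduction mod $2$ and $Q-\la I\equiv Q\equiv L\pmod 2$. This is shorter and avoids both interlacing and the principal-submatrix lemma, and it isolates the general principle (which the paper itself states informally when bounding multiplicities for $A(\li(G))$ by $e(G)-\rk_2(\A)$) that the multiplicity of an even integer eigenvalue of an integer symmetric matrix is at most its corank over $\mathbb{F}_2$. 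What the paper's route buys is a concrete parity obstruction in the form of a principal submatrix with odd determinant, a template it reuses in the proofs of Theorems~\ref{nodd} and \ref{mult2}; your route buys brevity and a cleaner separation of the linear-algebraic and combinatorial inputs.
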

\begin{proof}{
It is well known that for a given integral
matrix $A$ of rank $r$, there exist unimodular matrices (that is, integral matrices with
determinant $\pm1$) $U$ and $V$
such that $$UAV={\rm diag}(s_1, \ldots ,s_r, 0, \ldots, 0)$$ where
$s_1, \ldots ,s_r$ are positive integers with $s_1s_2\cdots s_i=d_i$ where $d_i$ is the greatest common divisor of all minors
of $A$ of order $i$, $1\leq i\leq r$.
(The matrix ${\rm diag}(s_1, \ldots ,s_r, 0, \ldots, 0)$ is called the {\em Smith form} of $A$.)

Let $S={\rm diag}(s_1, \ldots ,s_{n-1},0)$ be the Smith form of $L$. Note that
${\rm rank}_2(L)={\rm rank}_2(S)$.
By the Matrix-Tree Theorem, $\tau(G)=d_{n-1}=s_1s_2\cdots s_{n-1}$. It follows that at most $t$ of the $s_i$ are even. Therefore, ${\rm rank}_2(L)\ge n-t-1$ and so ${\rm rank}_2(Q)\ge n-t-1$.
By Lemma~\ref{princ}, both $Q$ (and also $L$) has a  principal
submatrix $B$ of order $k\ge n-t-1$ with full binary rank. By interlacing, if an even integer $\la$ is an eigenvalue of $Q$ (or $L$) with multiplicity at least $t+2$, then any principal submatrix of $Q$ (or $L$) of order $k\ge n-t-1$ has $\la$ as an eigenvalue.
So $\la$ is an eigenvalue of $B$. This implies that $\det(B)/\la$ is a rational algebraic integer and thus an integer. Hence $\det(B)$ is even, a contradiction. This completes the proof.
}\end{proof}

\begin{rem} The bound `$t+1$' of Theorem~\ref{t+1Q} on the multiplicity of even eigenvalues of $Q$ and $L$ is best possible. For, if we let $G$ to be the complete graph of order $n\equiv2\pmod4$, then by Cayley's Formula, $\tau(G)=n^{n-2}=2^{n-2}s$ for some odd $s$, and $Q(G)$ has the even integer $n-2$ as an eigenvalue of multiplicity $n-1$.
Also, $L(G)$ has  $n$ as an eigenvalue of multiplicity $n-1$.
\end{rem}

Suppose that $G$ is a connected graph with $n$ vertices,  $e(G)$ edges and $\A=A(\li(G))$.
By the same argument as the proof of Theorem~\ref{t+1Q}, we see that the multiplicity of any even integer  eigenvalue $\la$ of $\A$ is at most $e(G)-\rk_2(\A)$.
Therefore, in view of Theorem~\ref{doob}, the multiplicity of $\la$ is at most $e(G)-2\lceil n/2\rceil+2$. Combination with Theorem~\ref{t+1} yields the following result.

\begin{thm} Let $G$ be a connected graph with $n$ vertices, $e(G)$ edges, and $2^ts$ spanning trees with $s$ odd. Then the multiplicity of any even integer $\la\ne-2$ as an eigenvalue of $A(\li(G))$ is at most $\min\{t+1,e(G)-2\lceil n/2\rceil+2\}$.
\end{thm}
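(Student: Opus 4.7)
The plan is to mimic the Smith-form/interlacing argument used in the proof of Theorem~\ref{t+1Q}, but now applied directly to $\A=A(\li(G))$ in place of $Q(G)$, and then to intersect the resulting bound with the bound $t+1$ already supplied by Theorem~\ref{t+1}.

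First I would invoke Theorem~\ref{doob} to record $\rk_2(\A)$. Since $G$ is connected of order $n$, Doob's theorem gives $\rk_2(\A)=n-1$ when $n$ is odd and $\rk_2(\A)=n-2$ when $n$ is even; these two cases are captured uniformly by $\rk_2(\A)=2\lceil n/2\rceil-2$. Applying Lemma~\ref{princ} to $\A$ viewed over $\mathbb{F}_2$ then produces a principal $r\times r$ submatrix $B$ of $\A$, with $r=2\lceil n/2\rceil-2$, whose determinant is odd.

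Next I would run the Cauchy-interlacing/algebraic-integer argument on $B$. Suppose for contradiction that some even integer $\la\ne-2$ occurs as an eigenvalue of $\A$ with multiplicity $m\ge e(G)-r+1=e(G)-2\lceil n/2\rceil+3$. Then interlacing forces every principal submatrix of $\A$ of order $r$ to have $\la$ as an eigenvalue; in particular, $\la$ is an eigenvalue of $B$. Since $B$ is an integer matrix and $\la$ is a nonzero integer, $\det(B)/\la$ is a rational algebraic integer, hence an integer, so the odd number $\det(B)$ is divisible by the even number $\la$, a contradiction. This gives $m\le e(G)-2\lceil n/2\rceil+2$, and combining with the bound $t+1$ of Theorem~\ref{t+1} yields the stated minimum.

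The only point that requires care is the interlacing count: one must check that the threshold $m\ge e(G)-r+1$ is exactly what forces \emph{every} principal submatrix of order $r$ to inherit $\la$, so that the specific $B$ produced by Lemma~\ref{princ} is guaranteed to contain $\la$ in its spectrum. Beyond this bookkeeping, the argument is a direct transcription of the proof of Theorem~\ref{t+1Q}, with Doob's theorem supplying $\rk_2(\A)$ in the role that the Matrix-Tree Theorem and the Smith form of $L$ played there.
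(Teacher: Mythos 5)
Your proposal is correct and is essentially the paper's own argument: the paper likewise transfers the Smith-form/interlacing proof of Theorem~\ref{t+1Q} to $\A$ to get the bound $e(G)-\rk_2(\A)$, then uses Doob's theorem to evaluate $\rk_2(\A)=2\lceil n/2\rceil-2$ and intersects with the bound $t+1$ from Theorem~\ref{t+1}. The only cosmetic point is the case $\la=0$ (which the statement allows): there one does not divide by $\la$ but concludes directly that $\det(B)=0$ is even, contradicting the oddness of $\det(B)$.
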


In the rest of the paper, we shall need a variation of Theorem~\ref{coef} on the coefficients of the characteristic polynomials of
principal submatrices of order $n-1$ of $L(G)$ and $Q(G)$. For simplicity, we denote by $L_1=L_1(G)$ and $Q_1=Q_1(G)$ the matrices obtained from
 $L(G)$ and $Q(G)$ by removing the first row and the first column, respectively.
 Note that  $L_1(G)$ and $Q_1(G)$ are not the same as $L(G-v_1)$ and $Q(G-v_1)$ where $v_1$ is the vertex corresponding to the first rows of $L(G)$ and $Q(G)$.

A notion of `restricted weight' with respect to $v_1$ is useful to
describe the coefficients of $p_{L_1}(x)$ and $p_{Q_1}(x)$.
Let $U$ be a unicyclic subgraph of $G$ with odd cycle and $T$ be a tree subgraph of $G$. We define
$$W_1(U)=\left\{\begin{array}{ll} 0 & \hbox{if $U$ contains $v_1$,} \\4 & \hbox{otherwise,}\end{array}\right.~~\hbox{and}~~~
W_1(T)=\left\{\begin{array}{ll} 1 & \hbox{if $T$ contains $v_1$,} \\1+e(T) & \hbox{otherwise.}\end{array}\right.$$
We extend the domain of $W_1$ to all TU-subgraphs $H$ of $G$ by defining $W_1(H)$ to be the product of the $W_1$'s of the connected components of $H$.

\begin{lem}\label{coefnn} Let $p_{L_1}(x)=x^{n-1}+\ell'_1x^{n-2}+\cdots+\ell'_{n-1}$ and $p_{Q_1}(x)=x^{n-1}+q'_1x^{n-2}+\cdots+q'_{n-1}$
be the characteristic polynomials of $L_1$ and $Q_1$, respectively.
Then their coefficients are determined as follows.
\begin{itemize}
  \item[\rm(i)] $\ell'_j=(-1)^j\sum_{F_j}W_1(F_j)$, for $j=1,\ldots,n-1$,
where the summation runs over all spanning forests $F_j$ of $G$ with $j$ edges.
  \item[\rm(ii)] $q'_j=(-1)^j\sum_{H_j}W_1(H_j)$,  for $j=1,\ldots,n-1$,
where the summation runs over all TU-subgraphs $H_j$ of $G$ with $j$ edges.
\end{itemize}
\end{lem}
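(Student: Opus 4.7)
The plan mirrors the proof of Theorem~\ref{coef} but accounts for the deletion of the row and column indexed by $v_1$. Write $D_1$ (resp.\ $X_1$) for the submatrix of $D$ (resp.\ $X$) obtained by deleting the row indexed by $v_1$, so that $L_1 = D_1 D_1^\p$ and $Q_1 = X_1 X_1^\p$. Since the coefficients of the characteristic polynomial of an $(n-1)\times(n-1)$ matrix $M$ are, up to sign, sums of principal minors, I have
\[
\ell'_j = (-1)^j \sum_{I} \det(L_1(I,I)), \qquad p'_j = (-1)^j \sum_{I} \det(Q_1(I,I)),
\]
where in each case $I$ ranges over subsets of $V(G)\setminus\{v_1\}$ with $|I| = j$. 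The Binet--Cauchy formula then gives
\[
\det(L_1(I,I)) = \sum_{|S|=j} \det(D(I,S))^2, \qquad \det(Q_1(I,I)) = \sum_{|S|=j} \det(X(I,S))^2,
\]
so after swapping the order of summation the task reduces to: for each $j$-element edge set $S$, count the number of valid $I \subseteq V(G)\setminus\{v_1\}$ with $|I|=j$, weighted by $\det(D(I,S))^2$ or $\det(X(I,S))^2$.

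For part~(i), Lemma~\ref{invertD} tells us that the only nonzero contributions come from $S$ with $\langle S\rangle$ a forest, and each such $(I,S)$ contributes exactly $1$. Write the tree components of $\langle S\rangle$ as $T_1,\ldots,T_k$ with $n_i = 1 + e(T_i)$ vertices. The valid $I$'s correspond to choices of exactly one representative per $T_i$ in $V_0\setminus I$. I split into cases according to the position of $v_1$: if $v_1\notin V_0$, the number of representative choices is $\prod_i n_i$; if $v_1\in T_i$ for some $i$, then $v_1$ must be the representative of $T_i$ and the count is $\prod_{j\ne i}n_j$. In both cases this equals $W_1(F_j)$, where $F_j$ is the spanning forest with edge-set $S$ (isolated vertices of $F_j$, including possibly $v_1$ itself, contribute a factor $1$ in either definition).

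Part~(ii) is analogous, using Lemma~\ref{invertX}. Nonzero contributions force $\langle S\rangle$ to be a TU-subgraph, and each contribution is $4^c$, where $c$ counts the unicyclic (odd-cycle) components. Condition (iii) of Lemma~\ref{invertX} forces every vertex of every unicyclic component to lie in $I$, so if $v_1$ lies in such a component the count is $0$, matching the factor $0$ built into $W_1$. Otherwise, one chooses a representative from each tree component exactly as in part~(i), and the product of $4^c$ with the resulting number of valid $I$'s reproduces the multiplicative definition of $W_1(H_j)$.

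The only substantive work is the case analysis around the location of $v_1$ (outside $V_0$, in a tree component, or in a unicyclic component), and verifying that each case aligns with the correct factor in the definition of $W_1$. Once the Binet--Cauchy step is set up, the rest is routine bookkeeping; the main pitfall is simply being careful about whether ``spanning forest'' or ``TU-subgraph'' includes the isolated vertices outside $V_0$, which turns out to be immaterial since such vertices contribute $1$ to $W_1$.
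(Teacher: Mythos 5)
Your proposal is correct and follows essentially the same route as the paper: express the coefficients as sums of principal minors of $L_1$ and $Q_1$, apply Binet--Cauchy, invoke Lemmas~\ref{invertD} and \ref{invertX}, and count the admissible row sets $I\subseteq V(G)\setminus\{v_1\}$ by a case analysis on the location of $v_1$ (your handling of the unicyclic components forced inside $I$, and of isolated vertices contributing weight $1$, matches the paper's argument).
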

\begin{proof}{ Let $E=E(G)$  and $V_1=V(G)\setminus\{v_1\}$.

 (i)
 For $j=1,\ldots,n-1$, we have $$\ell'_j =(-1)^j \sum_{R\subseteq V_1,|R|=j}\det(L(R,R)).$$
 From the Binet--Cauchy Theorem it follows that
$$\det(L(R,R)) = \sum_{S\subseteq E,|S|=j}\det(D(R, S))^2.$$
Thus,
\begin{equation}\label{l'j}
\ell'_j =(-1)^j \sum\det(D(R, S))^2,
\end{equation}
 where the summation is over $R\subseteq V_1$, $S\subseteq E$ with $|R|=|S|=j$.
Now $\det(D(R,S))^2$ is either $0$ or $1$ by Lemma~~\ref{invertD}. Further, it takes the value 1 if and only if the three conditions of Lemma~\ref{invertD} hold.
Hence, if $\det(D(R,S))^2=1$, then $\langle S\rangle$ must be a union of some trees $T_1,\ldots,T_r$.
For such a subset $S$ of vertex labels, the contribution of $\langle S\rangle$ in (\ref{l'j}) is the number of $R\subseteq V_1$, $|R|=j$ such that $R$ is obtained by omitting one vertex from each $V(T_i)$.
Assume that $v_1$ is contained in $T_1$. Since $v_1\not\in V_1$, besides this vertex, we have no more options to omit any vertex of $T_1$.
For the other components $T_i$, $i=2,\ldots,r$, we have $1+e(T_i)$   ways of omitting one vertex.
It follows that the contribution of $\langle S\rangle$ in (\ref{l'j}) is $(1+e(T_2))\cdots(1+e(T_r))$ which is equal to $W_1(\langle S\rangle)$.

(ii) The proof is similar to that of part (i). The only points different from part (i) are that here we use Lemma~\ref{invertX} instead of Lemma~\ref{invertD} and that  if $\langle S\rangle$ is a TU-subgraph and some unicyclic component of $\langle S\rangle$ contains $v_1$, then for any $R\subseteq V_1$, $\det(X(R,S))=0$. Hence any TU-subgraph $\langle S\rangle$ with nonzero contribution in $q'_j$ must have all of its unicyclic components included in $V_1$.
}\end{proof}

\begin{thm}\label{nodd} Suppose that $G$ is a connected graph with an odd order and $\tau(G)$ is not divisible by $4$.
Then
\begin{itemize}
  \item[\rm(i)] $L(G)$ has no nonzero even eigenvalues;
  \item[\rm(ii)] $Q(G)$ has no integer eigenvalue $\la\equiv2\pmod4$;
  \item[\rm(iii)] $Q(G)$ has at most one eigenvalue $\la\equiv0\pmod4$ and such an  eigenvalue is  simple.
\end{itemize}
\end{thm}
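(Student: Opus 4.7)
My plan is to derive mod-$4$ congruences on the coefficients of $p_L(x)$ and $p_Q(x)$ via Theorem~\ref{coef}, then combine them with the vanishing $p_L(\lambda)=0$ and $p_Q(\lambda)=0$ at integer eigenvalues.

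First, by Theorem~\ref{coef}(ii), any TU-subgraph with $n$ edges consists entirely of odd unicyclic components (edges and vertices balance), so $W(H)=4^c$ with $c\ge 1$ and hence $q_n\equiv 0\pmod 4$. Among TU-subgraphs with $n-1$ edges, only spanning trees carry weight not divisible by $4$, each contributing $W=n$, giving $q_{n-1}\equiv(-1)^{n-1}n\tau(G)\pmod 4$; for $n$ odd this is $n\tau(G)$. The Matrix-Tree Theorem yields $\ell_{n-1}=(-1)^{n-1}n\tau(G)$ exactly, while Theorem~\ref{coef}(i) combined with $n$ odd shows $\ell_{n-2}$ is even, because every $2$-component spanning forest $T_1\cup T_2$ contributes $|V(T_1)|\cdot|V(T_2)|$ with one factor even (as $|V(T_1)|+|V(T_2)|=n$ is odd).

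For part~(i), a nonzero integer eigenvalue $\lambda$ of $L$ satisfies $\ell_{n-1}=-\lambda(\lambda^{n-2}+\ell_1\lambda^{n-3}+\cdots+\ell_{n-2})$, and for even $\lambda$ all summands with $\lambda^k$ for $k\ge 2$ vanish mod~$4$, leaving $\ell_{n-1}\equiv-\ell_{n-2}\lambda\pmod 4$. The right side is $\equiv 0\pmod 4$ (both $\ell_{n-2}$ and $\lambda$ even), but $\ell_{n-1}=n\tau(G)$ is odd (if $\tau$ odd) or $\equiv 2\pmod 4$ (if $\tau\equiv 2\pmod 4$), a contradiction either way. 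The same mod-$4$ reduction of $p_Q(\lambda)=0$ yields $q_{n-1}\lambda\equiv 0\pmod 4$; when $\tau$ is odd (so $q_{n-1}$ odd) this forces $\lambda\equiv 0\pmod 4$, which is (ii) in that subcase. For (iii) I factor $p_Q(x)=(x-\lambda_1)(x-\lambda_2)g(x)$ with $\lambda_1,\lambda_2\equiv 0\pmod 4$ (allowing $\lambda_1=\lambda_2$, so this simultaneously handles multiplicity $\ge 2$ and two distinct such eigenvalues); extracting the coefficient of $x$ gives $q_{n-1}=-(\lambda_1+\lambda_2)g(0)+\lambda_1\lambda_2\cdot c$ where $c$ is the coefficient of $x$ in $g$, and the right side is $\equiv 0\pmod 4$ since $\lambda_1+\lambda_2\equiv 0\pmod 4$ and $\lambda_1\lambda_2\equiv 0\pmod{16}$—contradicting $q_{n-1}\equiv n\tau(G)\pmod 4$, which is always nonzero mod~$4$ under the hypothesis. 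The same factorization trick applied to $\lambda_1,\lambda_2\equiv 2\pmod 4$ rules out multiplicity $\ge 2$ of any $\lambda\equiv 2\pmod 4$ and two distinct such eigenvalues (both force $q_{n-1}\equiv 0\pmod 4$), so at most one simple $\lambda\equiv 2\pmod 4$ can remain.

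The hardest step is excluding that lone simple $\lambda\equiv 2\pmod 4$ when $\tau(G)\equiv 2\pmod 4$, a subcase in which $q_{n-1}\equiv 2\pmod 4$ and the mod-$4$ reduction is automatically satisfied. To finish I would pass to mod~$8$: using $\lambda^2\equiv 4\pmod 8$, the vanishing $p_Q(\lambda)=0$ becomes $4q_{n-2}+\lambda q_{n-1}+q_n\equiv 0\pmod 8$; combining $q_{n-2}$ even (by the same parity reasoning as for $\ell_{n-2}$) with the enumeration $q_n\equiv-4N_1\pmod 8$ (where $N_1$ counts spanning connected odd-unicyclic subgraphs of $G$, since $\det Q=\sum_H 4^c$ and only $c=1$ contributions survive mod~$8$), the congruence collapses to $N_1\equiv 1\pmod 2$. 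The main obstacle is then the combinatorial-arithmetic lemma that under $\tau(G)\equiv 2\pmod 4$ the quantity $N_1$ must be even; I expect this to follow from the Smith-form structure of $L$ (a single factor of $2$, otherwise odd entries under our hypothesis) transferred to $Q$ via $L\equiv Q\pmod 2$, forcing $v_2(\det Q)\ge 3$—equivalent to $N_1\equiv 0\pmod 2$.
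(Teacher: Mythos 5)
Your reductions for (i), for (iii), and for (ii) in the subcase $\tau(G)$ odd are correct; in fact your factorization trick $p_Q(x)=(x-\la_1)(x-\la_2)g(x)$, compared against $q_{n-1}\equiv n\tau(G)\not\equiv0\pmod4$, handles multiplicity $\ge2$ and two distinct eigenvalues $\equiv0\pmod4$ in one stroke, where the paper instead passes to the principal submatrix $Q_1$, uses interlacing, and invokes $\det(Q_1)\equiv\det(L_1)\equiv\tau(G)\pmod 4$ via Lemma~\ref{coefnn}. The genuine gap is precisely the step you yourself flag as the main obstacle: the lemma that $N_1$, the number of spanning connected odd-unicyclic subgraphs, is even when $\tau(G)\equiv2\pmod4$. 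You do not prove it, and the route you sketch cannot deliver it as stated. From $\tau(G)\equiv2\pmod4$ the Smith form of $L$ has exactly one even invariant factor, so ${\rm rank}_2(L)=n-2$, and since $Q\equiv L\pmod 2$ also ${\rm rank}_2(Q)=n-2$; but this mod-$2$ information says only that exactly two invariant factors of $Q$ are even, hence only that $4\mid\det Q$. To get $8\mid\det Q$ (which is what ``$N_1$ even'' amounts to, since $\det Q\equiv 4N_1\pmod{16}$) you would need one of those two invariant factors to be divisible by $4$, and that is mod-$4$ data about $Q$ which the congruence $Q\equiv L\pmod2$ cannot supply. So your argument stalls exactly one factor of $2$ short of what your (correct) mod-$8$ congruence requires.

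The paper closes this gap with a short double count that you could adopt verbatim: let $\mathcal F$ be the set of pairs $(T,U)$ where $U$ is a spanning unicyclic subgraph of $G$ and $T$ is a spanning tree of $U$. Counting by $U$, each $U$ contributes its cycle length, so $|\mathcal F|\equiv N_1\pmod 2$ because the even-cycle $U$'s contribute an even amount; counting by $T$, each of the $\tau(G)$ spanning trees lies in exactly $e(G)-n+1$ such $U$, so $|\mathcal F|=\tau(G)\,(e(G)-n+1)$, which is even since $\tau(G)$ is even. Hence $N_1$ is even, and your mod-$8$ computation (which matches the paper's treatment of the case $\tau(G)\equiv2\pmod4$ through $p_Q=p_L+4f$) then rules out any eigenvalue $\la\equiv2\pmod4$ and completes part (ii). With that lemma supplied, the rest of your proposal is sound; your part (i) is even a little cleaner than the paper's, which evaluates $p_L(k)$ at an arbitrary even $k=2^ts$ and tracks divisibility by $2^{t+2}$, whereas dividing by $\la$ first lets you finish mod $4$.
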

\begin{proof}{Let $G$ be of order $n$.

(i) We claim that the coefficient $\ell_{n-2}$ of the characteristic polynomial $p_L(x)=x^n+\ell_1x^{n-1}+\cdots+\ell_{n-1}x$ of $L(G)$ is even.
By Theorem~\ref{coef}, we have $\ell_{n-2}=(-1)^{n-2}\sum_{F_{n-2}}W(F_{n-2})$ where the summation runs over all spanning forests $F_{n-2}$ of $G$ with $n-2$ edges.
Any $F_{n-2}$ is necessarily a union of two trees $T_1$ and $T_2$ with $e(T_1)+e(T_2)=n-2$.  As $n$ is odd, $$W(F_{n-2})=(1+e(T_1))(1+e(T_2))$$ is even.
This shows that $\ell_{n-2}$ is even.
Let $k$ be an even integer where $k=2^ts$ with $s$ odd and $t\ge1$.
Then, all the terms of $p_L(k)$ are divisible by $2^{t+2}$ except the last term, namely $\ell_{n-1}k=(-1)^{n-1}n\tau(G)k$ which is congruent to $2^t\tau(G)\pmod{2^{t+2}}$.
Therefore, $p_L(k)\equiv2^{t+1}\pmod{2^{t+2}}$ if $\tau(G)\equiv2\pmod4$ and $p_L(k)\equiv2^t\pmod{2^{t+2}}$ if $\tau(G)$ is odd. This proves (i).

(ii) From Theorem~\ref{coef} it follows that for some integers $s_1,\ldots,s_n$, we have $p_j=\ell_j+4s_j$, $j=1,\ldots,n-1$, and $p_n=4s_n$.
This implies that $p_Q(x)=p_L(x)+4f(x)$ where $f(x)$ is a polynomial with integer coefficients. First assume that $\tau(G)$ is odd. Hence $\ell_{n-1}=(-1)^{n-1}n\tau(G)$ is an odd integer. It follows that if $k\equiv2\pmod4$, then $p_Q(k)\equiv2\pmod4$, and we are done.
Next assume that $\tau(G)\equiv2\pmod4$.
We claim that $s_n$, the constant term of $f(x)$, is even.
Let $\cal U$ be the set of all spanning unicyclic subgraphs of $G$.
By Theorem~\ref{coef}\,(ii), $s_n$ is the number of $U\in\cal U$ such that the cycle of $U$ has an odd length.
If $s_n=0$, we are done.  So assume that $s_n\ge1$. 
Let $\cal F$ be the set of all pairs $(T,U)$ such that $U\in\cal U$ and $T$ is a spanning tree of $U$.
For any fixed $U$, the number of pairs $(T,U)\in\cal F$ is equal to the length of the cycle of $U$.
Therefore, $|\cal F|$ is congruent to $s_n$ mod 2.
On the other hand, for any spanning tree $T$ of $G$, there are exactly $e(G)-n+1$ unicyclic graphs $U\in\cal U$ containing $T$.
Therefore, $|{\cal F}|=\tau(G)(e(G)-n+1)\equiv0\pmod2$.
It follows that $s_n$ is even which in turn implies that $f(k)$ is even.
On the other hand, from the proof of part (i) we see that $p_L(k)\equiv n\tau(G)k\equiv4\pmod8$.
Therefore, $p_Q(k)=p_L(k)+4f(k)\equiv4\pmod8$.

(iii)
Suppose that $Q(G)$
has an even integer eigenvalue $\la$.
By part (ii), $\la\equiv0\pmod4$.
If the multiplicity of $\la$ is more than $1$, then $\la$ is an eigenvalue of $Q_1$.
Take $\theta$ as $\theta\la=\det(Q_1)$.
Then $\theta$ is a rational algebraic integer, and so it is an integer.
It follows that $\la$ divides $\det(Q_1)$ which means $\det(Q_1)\equiv0\pmod4$.
On the other hand, from Lemma~\ref{coefnn} it follows  $p_{Q_1}(x)=p_{L_1}(x)+4f_1(x)$ for some integer polynomial $f_1(x)$.
This implies that $\det(Q_1)\equiv\det(L_1)\equiv\tau(G)\pmod4$
 which is a contradiction.
}\end{proof}

\begin{rem} Note that if $n$ is odd and we let $G$ to be the complete graph of order $n$, then by Cayley's Formula, $\tau(G)=n^{n-2}$ is odd.
Also $2n-2$ is the largest eigenvalue of $Q(G)$.
Hence $Q(G)$ has an eigenvalue divisible by $4$.
 This shows that Theorem~\ref{nodd}\,(iii) cannot be improved.
\end{rem}

\begin{thm}\label{mult2} Let $G$ be a connected graph. If $L(G)$ or $Q(G)$ has an even integer eigenvalue of multiplicity at least $2$, then $\tau(G)$ is divisible by $4$.
\end{thm}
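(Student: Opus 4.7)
The strategy is a case analysis on the multiplicity $m\ge 2$ of $\lambda$ and on $\lambda\bmod 4$, combining Cauchy interlacing (to descend to $L_1$ or $Q_1$) with the modular arithmetic afforded by Lemma~\ref{coefnn}. Since $G$ is connected, $\lambda\neq 0$, so $\lambda$ is a nonzero even integer and $\lambda^2\ge 4$.

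When $m\ge 3$, Cauchy interlacing yields that $\lambda$ is an eigenvalue of $L_1$ (resp.\ $Q_1$) of multiplicity at least $2$, so $(x-\lambda)^2$ divides the corresponding characteristic polynomial; evaluating at $x=0$ gives $\lambda^2\mid(-1)^{n-1}\tau(G)$ by the Matrix--Tree Theorem, and $\lambda^2\mid(-1)^{n-1}\det(Q_1)\equiv(-1)^{n-1}\tau(G)\pmod 4$ by the same reduction $p_{Q_1}\equiv p_{L_1}\pmod 4$ used in the proof of Theorem~\ref{nodd}(iii) (each $W_1$ of a TU-subgraph containing a unicyclic component is $\equiv 0\pmod 4$). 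Thus $4\mid\tau(G)$. When $m=2$, interlacing only gives $p_{L_1}(\lambda)=0$ (resp.\ $p_{Q_1}(\lambda)=0$); then $\lambda\mid p_{L_1}(0)=(-1)^{n-1}\tau(G)$ (resp.\ $\lambda\mid\det(Q_1)\equiv\tau(G)\pmod 4$), which already handles the sub-case $\lambda\equiv 0\pmod 4$.

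The delicate case is $m=2$ with $\lambda\equiv 2\pmod 4$, say $\lambda=2u$ with $u$ odd. Since $\lambda^k\equiv 0\pmod 4$ for $k\ge 2$, reducing $p_{L_1}(\lambda)=0$ (and, via Lemma~\ref{coefnn}, also $p_{Q_1}(\lambda)=0$) modulo~$4$ collapses to
\[
2\ell'_{n-2}+(-1)^{n-1}\tau(G)\equiv 0\pmod 4,
\]
so $\tau(G)\equiv 2\ell'_{n-2}\pmod 4$. Therefore $4\mid\tau(G)$ is equivalent to $\ell'_{n-2}$ being even, which is what I would aim to prove.

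The main obstacle is this final evenness statement. By Lemma~\ref{coefnn}(i), after observing that each contributing spanning $2$-forest $F=T_1\cup T_2$ (with $v_1\in T_1$, possibly with $T_2$ a single vertex) satisfies $W_1(F)=|V(T_2)|$, we obtain $\ell'_{n-2}=(-1)^n\sum_F|V(T_2)|=\sum_{w\neq v_1}\tau(G;v_1,w)$, where $\tau(G;u,w)$ is the number of spanning $2$-forests of $G$ separating $u$ from $w$. To force this row-sum to be even I would combine three ingredients: the extra divisibility $\lambda^2\mid n\,\tau(G)$, obtained by writing $p_L(x)=x(x-\lambda)^2h(x)$ with $h\in\mathbb{Z}[x]$ and reading off $\ell_{n-1}=\lambda^2h(0)$; the sharper $4\mid\ell_{n-2}$, obtained similarly from the coefficient of $x^2$ (since $\ell_{n-2}=\lambda^2h'(0)-2\lambda h(0)=4u(uh'(0)-h(0))$); and the freedom to replace $v_1$ by any vertex $v\in V(G)$, since Cauchy interlacing gives that $\lambda$ is an eigenvalue of $L_v$ for every $v$, so the preceding derivation yields $\tau(G)\equiv 2\ell'_{n-2}(v)\pmod 4$ for all $v$, i.e.\ a common parity for all row-sums of the symmetric integer matrix $(\tau(G;v,w))_{v,w}$. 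Feeding these into the averaging identity $\sum_v\ell'_{n-2}(v)=2\ell_{n-2}$ should close the parity argument, forcing $\ell'_{n-2}$ to be even and hence $4\mid\tau(G)$.
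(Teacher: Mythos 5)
Your reductions are sound and track the paper's own argument closely: interlacing to pass from $L$ (resp.\ $Q$) to $L_1$ (resp.\ $Q_1$), the congruence $p_{Q_1}\equiv p_{L_1}\pmod 4$ from Lemma~\ref{coefnn}, and the reduction of everything to the single claim that $\ell'_{n-2}$ is even (once that claim is in hand, your separate cases $m\ge3$ and $\lambda\equiv0\pmod4$, which are themselves correct, become unnecessary). The gap is in the final parity step. Write $\epsilon$ for the common parity of the $\ell'_{n-2}(v)$. The averaging identity $\sum_v\ell'_{n-2}(v)=2\ell_{n-2}$, read modulo $2$, gives only $n\epsilon\equiv0\pmod2$, which forces $\epsilon=0$ when $n$ is odd but is vacuous when $n$ is even. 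Your other two ingredients do not rescue the even case: $4\mid\ell_{n-2}$ says the sum of the row sums is divisible by $8$, but since you control each $\ell'_{n-2}(v)$ only modulo $2$, this is perfectly compatible with all of them being odd (a symmetric zero-diagonal nonnegative integer matrix with an even number of rows can have all row sums odd and total sum divisible by $8$, e.g.\ row sums $1,1,3,3$); and $\lambda^2\mid n\tau(G)$ reduces to $4\mid n\tau(G)$, which holds automatically for even $n$ even when $\tau(G)\equiv2\pmod4$. So the genuinely delicate case --- $n$ even, $m=2$, $\lambda\equiv2\pmod4$ --- is not closed by what you have written.

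The missing idea is a term-by-term congruence in place of the averaged one. For a spanning $2$-forest $T_1\cup T_2$ with $v_1\in T_1$ one has $(1+e(T_1))(1+e(T_2))\equiv 1+e(T_2)\pmod 2$, precisely because $e(T_1)+e(T_2)=n-2$ is even (equivalently, $|V(T_1)|\,|V(T_2)|\equiv|V(T_2)|\pmod2$ when $|V(T_1)|+|V(T_2)|=n$ is even). Summing over all such forests gives $\ell'_{n-2}\equiv\ell_{n-2}\pmod2$ for even $n$, and your (correct) computation $\ell_{n-2}=\lambda^2a-2\lambda b$ then shows $\ell'_{n-2}$ is even, which finishes the proof exactly as you intended. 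The odd-$n$ case is disposed of separately --- by Theorem~\ref{nodd}, or equivalently by your observation that $n\epsilon\equiv0\pmod2$ forces $\epsilon=0$ for odd $n$. With this substitution your argument coincides with the paper's.
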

\begin{proof}{Let $G$ be of order $n$. If $n$ is odd we are done by Theorem~\ref{nodd}. So we may assume that $n$ is even.
Let $v_1\in V(G)$ correspond to the first row of $L(G)$ and $Q(G)$.

First, let $\la$ be an even integer eigenvalue of $L(G)$ with multiplicity at least $2$.
By the interlacing property of eigenvalues of Hermitian matrices, $\la$ is also an eigenvalue of $L_1(G)$. Let $p_{L_1}(x)=x^{n-1}+\ell'_1x^{n-2}+\cdots+\ell'_{n-1}$  be the characteristic polynomials of $L_1$.
By the Matrix-Tree Theorem,
$\ell'_{n-1}=(-1)^{n-1}\tau(G)$.  We show that $\ell'_{n-2}$ is even.
Any spanning forest of $G$ with $n-2$ edges is a union of two trees $T_1$ and $T_2$ where we may assume that $T_1$ contains the vertex $v_1$, and hence by Lemma~\ref{coefnn}, $\ell'_{n-2}=(-1)^{n-2}\sum_{T_1\cup T_2}(1+e(T_2))$.
We note that
\begin{equation}\label{eT1eT2}
(1+e(T_1))(1+e(T_2))\equiv(1+e(T_2))\pmod2,
\end{equation}
 for if $e(T_2)$ is even, then $e(T_1)$ is also even as $e(T_1)+e(T_2)=n-2$ so both sides of \eqref{eT1eT2} are odd, and if $e(T_2)$ is odd both sides of  \eqref{eT1eT2} are even.
This implies that
$$\ell'_{n-2}=(-1)^{n-2}\sum_{T_1\cup T_2}(1+e(T_2))\equiv(-1)^{n-2}\sum_{T_1\cup T_2}(1+e(T_1))(1+e(T_2))=\ell_{n-2} \pmod2.$$
Note that $p_L(x)=(x-\la)^2g(x)$ for some integer polynomial $g(x)$.
If  $ax^2+bx$ are the last two terms of $g(x)$, then  $\ell_{n-2}=\la^2a-2\la b$. It follows that $\ell_{n-2}$ and so $\ell'_{n-2}$ is even.
Therefore, $$0=p_{L_1}(\la)\equiv\ell'_{n-1}=(-1)^{n-1}\tau(G)\pmod4.$$

Now let $\la$ be an even integer eigenvalue of $Q(G)$ with multiplicity at least $2$.
So $\la$ is also an eigenvalue of $Q_1(G)$.
From Theorem~\ref{coefnn} it follows  $p_{Q_1}(x)=p_{L_1}(x)+4f_1(x)$ for some integer polynomial $f_1(x)$.
Therefore, $0=p_{Q_1}(\la)\equiv p_{L_1}(\la)\equiv\tau(G)\pmod4.$
}\end{proof}

As a very special case of Theorem~\ref{mult2} we deduce the following result which was conjectured in \cite{zsyb}.
\begin{cor} Suppose that $G$ is a unicyclic graph and the nullity of $A(\li(G))$ is equal $2$. Then the length of the unique cycle of $G$ is divisible by $4$.
\end{cor}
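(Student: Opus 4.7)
The plan is to reduce the corollary to Theorem~\ref{mult2} via the correspondence between the nullity of $A(\li(G))$ and the spectrum of $Q(G)$, and then invoke the elementary fact that for a unicyclic graph $\tau(G)$ equals the length of the unique cycle.

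First I would translate the hypothesis ``nullity of $A(\li(G))$ equals $2$'' into a statement about $Q(G)$. From equation~(\ref{A+2I}) together with the fact that $XX^\p$ and $X^\p X$ share the same nonzero spectrum (with multiplicities), the multiplicity of $0$ as an eigenvalue of $A(\li(G))$ equals the multiplicity of $2$ as an eigenvalue of $Q(G)$ (note $2\neq 0$, so no nonzero vs.\ zero eigenvalue issue arises). Hence $Q(G)$ has $2$ as an even integer eigenvalue of multiplicity at least $2$.

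Next I would apply Theorem~\ref{mult2} directly to $Q(G)$: it yields $4\mid\tau(G)$. Finally, for a unicyclic graph $G$ with unique cycle of length $\ell$, every spanning tree is obtained by deleting exactly one edge from that cycle, so $\tau(G)=\ell$. Combining the two facts gives $4\mid\ell$, as required.

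There is no real obstacle here; the content of the corollary is contained in Theorem~\ref{mult2} together with the computation $\tau(G)=\ell$ for unicyclic graphs. The only point to keep in mind is that the theorem is stated for \emph{connected} graphs: a unicyclic graph is by convention connected (otherwise $\tau(G)=0$, making the statement vacuous), so the hypothesis is satisfied and the argument above goes through in a few lines.
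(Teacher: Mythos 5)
Your proposal is correct and follows exactly the route the paper intends: the nullity of $A(\li(G))$ equals the multiplicity of $2$ as an eigenvalue of $Q(G)$, so Theorem~\ref{mult2} gives $4\mid\tau(G)$, and $\tau(G)$ equals the cycle length for a unicyclic graph. The paper states the corollary as an immediate special case of Theorem~\ref{mult2} without further argument, and your few lines are precisely the details being left implicit.
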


More general assertions than Theorems~\ref{nodd} and \ref{mult2} hold for the Laplacian matrix. These are given below. We omit the proof which is essentially the same as the proofs of
Theorems~\ref{nodd} and \ref{mult2}.
\begin{thm}
Let $G$ be a connected graph of order $n$.
\begin{itemize}
  \item[\rm(i)] If $n$ is odd and $\tau(G)=2^ts$ with $s$ odd, then $L(G)$ has no nonzero eigenvalue $\la$ such that $2^{\max(1,t)}$ divides $\la$.
 \item[\rm(ii)] If $L(G)$ has an integer eigenvalue $\la=2^ts$ with $t\ge1$, $s$ odd and with multiplicity at least $2$,
then $2^{t+1}$ divides $\tau(G)$.
\end{itemize}
\end{thm}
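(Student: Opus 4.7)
My plan is to mirror the proofs of Theorems~\ref{nodd} and \ref{mult2}, replacing their mod~$2$ and mod~$4$ bookkeeping by careful tracking of $2$-adic valuations (write $\nu_2$ for the $2$-adic valuation throughout).

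For part~(i), I would set $u=\max(1,t)$ and suppose, toward a contradiction, that some nonzero integer $\la$ with $v:=\nu_2(\la)\ge u$ is an eigenvalue of $L(G)$. Exactly as in the proof of Theorem~\ref{nodd}(i), the hypothesis that $n$ is odd forces every spanning forest of $G$ with $n-2$ edges to split as $T_1\cup T_2$ with $e(T_1)+e(T_2)=n-2$ odd, so $(1+e(T_1))(1+e(T_2))$ is even; hence $\ell_{n-2}$ is even. I would then evaluate $p_L(\la)=\la^n+\ell_1\la^{n-1}+\cdots+\ell_{n-1}\la$ term by term. Using $\ell_{n-1}=\pm n\tau(G)$ with $n$ and $s$ odd, the term $\ell_{n-1}\la$ has $\nu_2$ exactly $t+v$. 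Every other term has strictly larger $\nu_2$: $\la^n$ contributes $\nu_2\ge nv\ge 3v$, the distinguished term $\ell_{n-2}\la^2$ contributes $\nu_2\ge 1+2v$, and each intermediate $\ell_j\la^{n-j}$ with $1\le j\le n-3$ contributes $\nu_2\ge(n-j)v\ge 3v$. A short check shows that both $3v$ and $1+2v$ strictly exceed $t+v$ whenever $v\ge\max(1,t)$, so $\nu_2(p_L(\la))=t+v<\infty$, contradicting $p_L(\la)=0$.

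For part~(ii), I would split on the parity of $n$. If $n$ is odd, write $\tau(G)=2^Ts'$ with $s'$ odd; since $2^t\mid\la$ with $t\ge1$, the contrapositive of part~(i) forces $\max(1,T)>t$, whence $T\ge t+1$ and $2^{t+1}\mid\tau(G)$. If $n$ is even, I would adapt the argument of Theorem~\ref{mult2}. Factor $p_L(x)=(x-\la)^2g(x)$; since $p_L$ has zero constant term while $\la\ne 0$, we get $g(0)=0$, so the last two nonzero terms of $g(x)$ have the form $ax^2+bx$ and $\ell_{n-2}=\la^2a-2\la b$. Hence $\nu_2(\ell_{n-2})\ge\min(2t,t+1)=t+1$; in particular $\ell_{n-2}$ is even, and the parity congruence $\ell'_{n-2}\equiv\ell_{n-2}\pmod 2$ proved inside Theorem~\ref{mult2} (which uses $n$ even to conclude $e(T_1)+e(T_2)$ is even) makes $\ell'_{n-2}$ even as well. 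By interlacing, $\la$ is an eigenvalue of $L_1$, so $p_{L_1}(\la)=0$. Tracking $\nu_2$ of each summand, every term apart from $\ell'_{n-1}=\pm\tau(G)$ is divisible by $2^{t+1}$, so $\tau(G)\equiv0\pmod{2^{t+1}}$.

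The only place that calls for care is the bookkeeping in~(i): I must check that the cheap bound $\nu_2(\ell_{n-2}\la^2)\ge 1+2v$, which only exploits $\ell_{n-2}$ being even, still strictly exceeds $\nu_2(\ell_{n-1}\la)=t+v$. This reduces to $v\ge t$, which is automatic from $v\ge\max(1,t)$. All remaining ingredients are direct transcriptions of the arguments in Theorems~\ref{nodd} and \ref{mult2}, consistent with the author's note that the proof is ``essentially the same''.
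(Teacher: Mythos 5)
Your proposal is correct and is exactly the adaptation the paper has in mind (the paper omits the proof, describing it as ``essentially the same'' as those of Theorems~\ref{nodd} and \ref{mult2}): the $2$-adic valuation bookkeeping in (i), the reduction of the odd-order case of (ii) to (i), and the even-order evaluation of $p_{L_1}(\la)$ using $\ell'_{n-2}\equiv\ell_{n-2}\pmod 2$ and $\ell'_{n-1}=\pm\tau(G)$ all check out.
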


\section*{Acknowledgments}
  I would like to thank Ali Mohammadian for drawing my attention to the conjecture given in \cite{zsyb} which motivated me to establish Theorem~\ref{mult2}.  I also thank anonymous referees for several helpful comments and corrections.
   The research of the  author was partially supported by a grant from IPM (No. 92050114).

\end{document}